\newcommand{\N}{{\mathbb N}}
\newcommand{\Z}{{\mathbb Z}}
\theoremstyle{plain}
\numberwithin{equation}{section}
\newtheorem{thm}{Theorem}[section]
\newtheorem{theorem}[thm]{Theorem}
\newtheorem{lemma}[thm]{Lemma}
\begin{document}
\fancyhead{}
\renewcommand{\headrulewidth}{0pt}

\setcounter{page}{1}

\title[A graph-theoretic encoding of Lucas sequences]{A graph-theoretic encoding of Lucas sequences}
\author{James Alexander}
\address{James Alexander \\
	Department of Mathematical Sciences\\
                University of Delaware\\
                Newark, Delaware\\
                United States}
\email{alex@math.udel.edu}
\author{Paul Hearding}
\address{Paul Hearding\\
	Department of Mathematical Sciences\\
                University of Delaware\\
                Newark, Delaware\\
                United States}
\email{hearding@math.udel.edu}
\begin{abstract}
Some well-known results of Prodinger and Tichy are that the number of independent sets in the $n$-vertex path graph is $F_{n+2}$, and that the number of independent sets in the $n$-vertex cycle graph is $L_n$. We generalize these results by introducing new classes of graphs whose independent set structures encode the Lucas sequences of both the first and second kind. We then use this class of graphs to provide new combinatorial interpretations of the terms of Dickson polynomials of the first and second kind.
\end{abstract}

\maketitle

\section{Introduction and main results}
 For any graph $G$, we call a set $S$ of vertices of $G$ an \emph{independent set} if no two vertices of $S$ are adjacent. We let $i(G)$ denote the total number of independent sets of $G$ and, for each $t\in\N$, we let $i_t(G)$ denote the number of independent sets of $G$ of size $t$; thus, $i(G)=\sum_{t\geq 0}i_t(G)$. The quantity $i(G)$ was first explicitly considered by Prodinger and Tichy in \cite{PRODTICH}, who referred to it as the Fibonacci number of a graph. We present  two of their main results as the following theorem. Here, $P_n$ denotes the $n$-vertex path graph, $C_n$ denotes the $n$-vertex cycle graph (where the $1$-vertex cycle is taken to be a vertex with a loop, and the $2$-vertex cycle is taken to be a single edge), and we adopt the common conventions $F_0=0$, $F_1=1$, $L_0=2$, and $L_1=1$. 
 
 \begin{theorem}\label{prod}
For any $n\in\N$, 
\begin{equation}
i(P_n)=F_{n+2},
\end{equation}•
and 
\begin{equation}
i(C_n)=L_n. 
\end{equation}•
\end{theorem}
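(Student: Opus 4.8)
The plan is to prove both identities by a \emph{deletion argument}: I condition on whether a chosen vertex lies in a given independent set, reduce the count to independent-set counts on smaller path graphs, and then match the outcome to the Fibonacci and Lucas numbers. For the path this produces the Fibonacci recurrence directly; for the cycle it expresses $i(C_n)$ as a sum of two path counts, which I then identify with $L_n$ via a standard Fibonacci--Lucas identity.

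For the path, I would label the vertices of $P_n$ as $v_1,\dots,v_n$ in order and split the independent sets according to whether the endpoint $v_n$ is included. If $v_n\notin S$, then $S$ is an arbitrary independent set of the path $P_{n-1}$ on $v_1,\dots,v_{n-1}$; if $v_n\in S$, then its unique neighbor $v_{n-1}$ must be excluded, and $S\setminus\{v_n\}$ is an arbitrary independent set of $P_{n-2}$ on $v_1,\dots,v_{n-2}$. This bijective case split yields $i(P_n)=i(P_{n-1})+i(P_{n-2})$. Since the base cases give $i(P_0)=1=F_2$ (only the empty set) and $i(P_1)=2=F_3$, an immediate induction establishes $i(P_n)=F_{n+2}$.

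For the cycle, I would condition on a single vertex $v_1$ of $C_n$, assuming first that $n\ge 3$. If $v_1\notin S$, deleting $v_1$ leaves the path $P_{n-1}$ on the remaining vertices, contributing $i(P_{n-1})=F_{n+1}$ sets. If $v_1\in S$, then both cycle-neighbors $v_2$ and $v_n$ are forced out of $S$, and the surviving vertices $v_3,\dots,v_{n-1}$ form the path $P_{n-3}$, contributing $i(P_{n-3})=F_{n-1}$ sets. Hence $i(C_n)=F_{n+1}+F_{n-1}$, and invoking the standard identity $L_n=F_{n+1}+F_{n-1}$ finishes this range. The degenerate cases must then be verified directly against the stated conventions: for $C_1$ (a single looped vertex) the loop forces that vertex out of every independent set, so $i(C_1)=1=L_1$, and for $C_2$ (a single edge) the independent sets are $\emptyset,\{v_1\},\{v_2\}$, giving $i(C_2)=3=L_2$.

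The two counting recurrences are routine; the point that needs care is the cycle's degenerate regime. For small $n$ the reduction ``delete $v_1$ together with its neighbors'' does not cleanly yield a path under the paper's conventions, so $C_1$ and $C_2$ fall outside the $n\ge 3$ argument and must be handled as base cases, and one should also confirm that the $n=3$ instance---where the leftover graph is the empty path $P_0$---is consistent with $i(P_0)=1$. Once these boundary cases are reconciled with the loop and single-edge conventions, the identity $L_n=F_{n+1}+F_{n-1}$ delivers the result for all $n$.
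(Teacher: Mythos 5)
Your proof is correct, but it follows a different route from the paper. You establish both identities by deletion recurrences: splitting on whether an endpoint of $P_n$ lies in the independent set gives $i(P_n)=i(P_{n-1})+i(P_{n-2})$, and splitting on a vertex of $C_n$ gives $i(C_n)=i(P_{n-1})+i(P_{n-3})=F_{n+1}+F_{n-1}=L_n$, with the degenerate cases $C_1$ and $C_2$ checked directly against the loop and single-edge conventions --- all of which is sound, including your verification that the $n=3$ case collapses correctly to $i(P_0)=1$. The paper, by contrast, never proves Theorem~\ref{prod} on its own: it is stated as a known result of Prodinger and Tichy and then recovered as the special case $a=b=1$ of Theorem~\ref{chainsaw}, whose proof is not recursive at all. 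Instead, the paper counts independent sets stratified by size, using the exact formulas $i_t(P_n)=\binom{n-t+1}{t}$ and $i_t(C_n)=\frac{n}{n-t}\binom{n-t}{t}$, and matches the resulting sums term by term against the Dickson polynomials $E_n$ and $D_n$, which are in turn identified with the Lucas sequences $U_{n+1}(a,-b)$ and $V_n(a,-b)$. Your argument buys elementarity and self-containment (no binomial identities, no Dickson polynomial machinery, only the single identity $L_n=F_{n+1}+F_{n-1}$), while the paper's approach buys the refined size-by-size information $i_t$ and, crucially, the generalization to the chainsaw and broken chainsaw graphs, where a naive deletion recurrence would be considerably messier to set up.
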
•

Our main result will be a generalization of Theorem~\ref{prod}. For this, we will define two new classes of graphs. Fix any $n,a,b\in\N$ with $a\geq b$. Create an $n$-vertex cycle with vertex set $\Z_n$; for each vertex of the cycle, create an $a$-vertex complete graph sharing with the cycle only this vertex. Then, for each $v\in\Z_n$, make vertex $v$ adjacent to $a-b$ additional vertices of the complete graph containing vertex $v+1$ (mod $n$), and denote this graph $C(n,a,b)$. For example, $C(6,5,3)$ is the graph:
\begin{figure}[h!]
\includegraphics[width=1.5in]{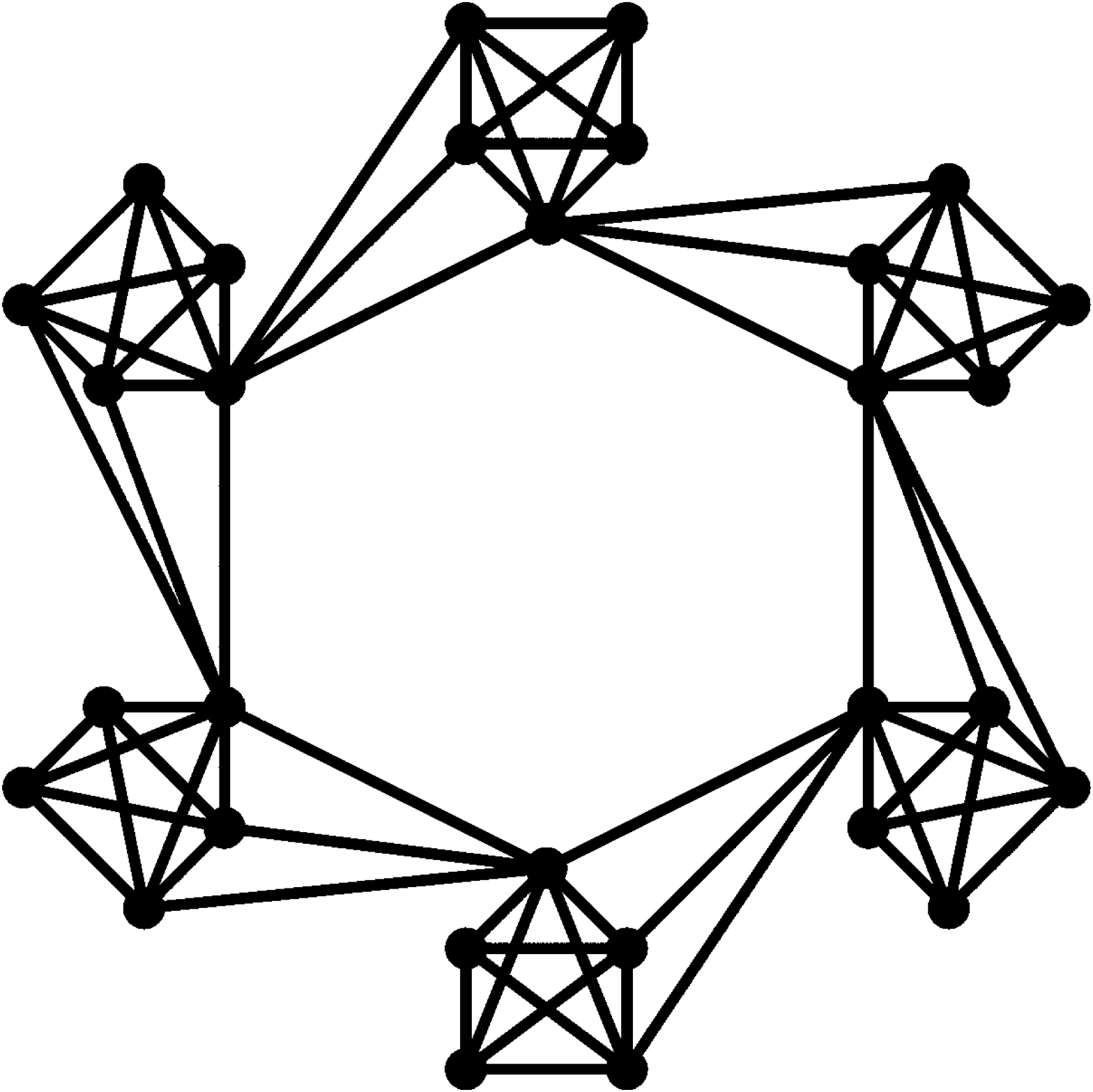}.
\end{figure}

\newpage

We refer to this class of graphs, over all valid $n,a,b\in\N$, as \emph{chainsaw graphs}. When referring to a particular chainsaw graph $C(n,a,b)$, we call the $n$ vertices lying on the inner cycle its \emph{chain vertices}, and we call the set of remaining vertices its \emph{blade vertices}. This will serve as our generalization of $C_n$, as we will soon see. We generalize the path graph to a graph which we denote $P(n,a,b)$ by considering $C(n+1,a,b)$ and removing one of the chain vertices (e.g., vertex $0$) and all edges adjacent to it. We call these graphs \emph{broken chainsaws}, and refer to the vertices similarly as chain and blade vertices. 

With these definitions in place, we now state our generalization of Theorem~\ref{prod}. As is common, we let $U_n(a,b)$ and $V_n(a,b)$ denote the Lucas sequences of the first and second kind, respectively. That is, we let $U_0(a,b)=0$, $U_1(a,b)=1$, and $U_n(a,b)=aU_{n-1}(a,b)-bU_{n-2}(a,b)$ for $n>1$ (so that $U_n(1,-1)$ are the Fibonacci numbers);  we let $V_0(a,b)=2$, $V_1(a,b)=a$, and $V_n(a,b)=aV_{n-1}(a,b)-bV_{n-2}(a,b)$ for $n>1$ (so that $V_n(1,-1)$ are the Lucas numbers).

\begin{theorem}\label{chainsaw}
For any $n,a,b\in\N$ satisfying $a\geq b$, we have that 
\begin{equation}
i(P(n,a,b))=U_{n+2}(a,-b),\label{pathgen}
\end{equation}• 
and that
\begin{equation}
i(C(n,a,b))=V_n(a,-b). \label{cyclegen}
\end{equation}•
\end{theorem}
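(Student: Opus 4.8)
The plan is to set up a transfer-matrix count, reading the chainsaw one $K_a$-segment at a time around the inner cycle. The crucial local observation is that each segment---the complete graph on a chain vertex $c_v$ together with its $a-1$ blade vertices---can contribute at most one vertex to any independent set. Moreover, the only edges leaving segment $v$ emanate from the chain vertex $c_v$ (the cycle edge to $c_{v+1}$ and the $a-b$ edges into the blades of segment $v+1$), so the single piece of information a segment must pass to its successor is whether or not its chain vertex was selected. I would record this as a state in $\{0,1\}$, and split the $a-1$ blades of each segment into the $a-b$ adjacent to the previous chain vertex and the $b-1$ that are not.

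Counting, for a fixed state of $c_{v-1}$, the admissible choices within segment $v$ by the resulting state of $c_v$, I would obtain the $2\times 2$ transfer matrix
\[
T=\begin{pmatrix} a & 1 \\ b & 0\end{pmatrix},
\]
with rows and columns indexed by ``chain vertex unselected/selected'': if $c_{v-1}$ is unselected there are $1+(a-b)+(b-1)=a$ ways to leave $c_v$ unselected and one way to select it, while if $c_{v-1}$ is selected the forbidden blades leave only $1+(b-1)=b$ ways to leave $c_v$ unselected and none to select it. The characteristic polynomial of $T$ is $\lambda^2-a\lambda-b$, which is exactly the characteristic polynomial of the recurrence $X_n=aX_{n-1}+bX_{n-2}$ defining $U_n(a,-b)$ and $V_n(a,-b)$; this is what ties the matrix to the Lucas sequences.

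For the cycle $C(n,a,b)$ the independent sets correspond to closed state-sequences around $\Z_n$, so $i(C(n,a,b))=\operatorname{tr}(T^n)$. Writing $\lambda_1,\lambda_2$ for the eigenvalues of $T$, this equals $\lambda_1^n+\lambda_2^n$, the Binet form of $V_n(a,-b)$, giving \eqref{cyclegen}. For the broken chainsaw $P(n,a,b)$ I would process the surviving segments $1,\dots,n$ as a linear chain. Deleting $c_0$ frees the $a-b$ adjacent blades of segment $1$, so segment $1$ behaves as if its predecessor were unselected, which I encode by the initial vector $\mathbf{e}_0=(1,0)^\top$; meanwhile the orphaned blade clump $B_0$ of the deleted segment survives as a $K_{a-1}$ still attached to $c_n$ through its $a-b$ adjacent blades, contributing a final weight vector $(a,b)^\top$ according to whether $c_n$ is unselected or selected. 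Hence $i(P(n,a,b))=\mathbf{e}_0^\top T^n (a,b)^\top$, and since $(a,b)^\top=T\mathbf{e}_0$ this telescopes to $\mathbf{e}_0^\top T^{n+1}\mathbf{e}_0=(T^{n+1})_{00}$. A short induction gives $T^m=\left(\begin{smallmatrix} U_{m+1}(a,-b) & U_m(a,-b)\\ bU_m(a,-b) & bU_{m-1}(a,-b)\end{smallmatrix}\right)$, so the top-left entry of $T^{n+1}$ is $U_{n+2}(a,-b)$, establishing \eqref{pathgen}.

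I expect the main obstacle to be the boundary bookkeeping rather than the algebra: correctly arguing that deleting $c_0$ turns segment $1$ into an unconstrained start while leaving $B_0$ as a genuine clique dangling off $c_n$, and verifying that the weight vector for $B_0$ is precisely $T\mathbf{e}_0$ so that the count collapses to a single matrix power. A secondary point requiring care is the degenerate conventions for small cycles---the looped vertex when $n=1$ and the single edge when $n=2$---which must be checked directly so that $\operatorname{tr}(T^n)=V_n(a,-b)$ remains valid there; in both cases the vanishing entry $T_{11}=0$ conveniently encodes the impossibility of selecting a chain vertex adjacent to itself.
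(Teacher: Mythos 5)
Your transfer-matrix argument is correct; I checked the local counts (the $K_a$ segment forces at most one vertex per segment, and the row sums $a{+}1$ and $b{+}0$ are right), the identity $(a,b)^\top=T\mathbf{e}_0$ that collapses the path count to $(T^{n+1})_{00}$, the induction giving $T^m$ in terms of $U_m(a,-b)$, and the degenerate cases $n=1,2$. But it is a genuinely different route from the paper's. The paper never sets up a recurrence or a matrix: it counts independent sets of $C(n,a,b)$ and $P(n,a,b)$ according to the number $t$ of chain vertices they contain, observes that this count is $i_t(C_n)b^ta^{n-2t}$ (resp.\ $i_t(P_n)b^ta^{n-2t+1}$) using the known closed forms $i_t(C_n)=\frac{n}{n-t}\binom{n-t}{t}$ and $i_t(P_n)=\binom{n-t+1}{t}$ of Hopkins--Staton, and then recognizes the resulting sums term-by-term as the Dickson polynomials $D_n(a,b)$ and $E_n(a,b)$, invoking the known identities $D_n(a,b)=V_n(a,-b)$ and $E_n(a,b)=U_{n+1}(a,-b)$. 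That approach is shorter given the imported results and, more importantly, it delivers the paper's secondary goal: a combinatorial interpretation of each individual term of the Dickson polynomials. Your approach is self-contained --- it needs neither the fixed-size independent set counts nor any facts about Dickson polynomials, producing the Binet/recurrence forms of $U$ and $V$ directly from the eigenvalues of $T$ --- but as written it loses the refinement by number of chain vertices; you could recover it by marking the selected-chain-vertex transitions with an indeterminate, at the cost of redoing the paper's term-by-term identification. One small wording caveat: edges also enter segment $v$ from $c_{v-1}$ into the blades of segment $v$, so it is not quite true that all edges leaving segment $v$ emanate from $c_v$; the correct (and sufficient) statement, which your count actually uses, is that every inter-segment edge emanates from a chain vertex, so the state of $c_{v-1}$ is the only external data segment $v$ needs. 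Also note your closed form for $T^m$ fails at $m=0$, but you only ever use $m\geq 1$.
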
•

We prove this Theorem in Section~\ref{proofs} while discussing some relationships between Dickson polynomials and Lucas sequences and providing some graph-theoretic interpretations of these well-studied objects.  We note that Theorem~\ref{prod} is the special case of Theorem~\ref{chainsaw} when $a=b=1$.

\section{Relationships to Dickson polynomials and a proof of Theorem~\ref{chainsaw}}\label{proofs}

In this section we examine the relationship between Dickson Polynomials and Lucas sequences and discuss some results which will be crucial to proving Theorem~\ref{chainsaw}. In the process, we provide new graph-theoretic interpretations of Lucas sequences and Dickson polynomials. As is common, we use $D_n(X,Y)$ and $E_n(X,Y)$ to denote Dickson polynomials of the first and second kind, respectively. That is, we let 
\begin{equation}
D_n(X,Y):=\sum_{t=0}^{\lfloor n/2\rfloor}\frac{n}{n-t}\binom{n-t}{t}(-Y)^tX^{n-2t},\label{firstkind}
\end{equation}•
and
\begin{equation}
E_n(X,Y):=\sum_{t=0}^{\lfloor n/2\rfloor}\binom{n-t}{t}(-Y)^tX^{n-2t}.\label{secondkind}
\end{equation}•
We start with the following result which is known in finite field theory. See, for example, \cite{KIMBO}, \cite{KIMBT}, or \cite{DPAP} for more on this result.  For more information on Dickson polynomials in general, see \cite{DPAP}.
\begin{theorem}
For any $n\in\N$ and $a,b\in\Z$, 
\begin{equation}
D_n(a,b)=V_n(a,-b),
\end{equation}•
and 
\begin{equation}
E_n(a,b)=U_{n+1}(a,-b). 
\end{equation}•
\end{theorem}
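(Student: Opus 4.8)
The plan is to prove both identities by exhibiting a common three-term recurrence and matching initial data, so that equality for every $n$ follows by induction on $n$. The first step is to read off from \eqref{firstkind} and \eqref{secondkind} the recurrences
\begin{equation}
D_n(X,Y)=X\,D_{n-1}(X,Y)-Y\,D_{n-2}(X,Y),\qquad E_n(X,Y)=X\,E_{n-1}(X,Y)-Y\,E_{n-2}(X,Y)\quad(n\ge 2),
\end{equation}
together with the base values $D_0=2$, $D_1=X$ and $E_0=1$, $E_1=X$, all of which come straight from the defining sums.

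The heart of the argument is the second-kind recurrence. I would expand $X\,E_{n-1}(X,Y)-Y\,E_{n-2}(X,Y)$, reindex the second sum by $t\mapsto t+1$ so that every term carries the common monomial $(-Y)^tX^{n-2t}$, and observe that its coefficient is $\binom{n-1-t}{t}+\binom{n-1-t}{t-1}$. Pascal's rule collapses this to $\binom{n-t}{t}$, which is exactly the coefficient in $E_n(X,Y)$, giving the recurrence. For the first-kind polynomials I would avoid repeating a more delicate binomial manipulation (the factor $\tfrac{n}{n-t}$ obstructs a direct appeal to Pascal's rule) by instead invoking the standard relation $D_n=E_n-Y\,E_{n-2}$ and deducing the $D$-recurrence from the $E$-recurrence already in hand.

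With the recurrences established, I would compare them to the Lucas sequences. After substituting the second parameter as prescribed in the statement, both $V_n(a,-b)$ and the shifted sequence $U_{n+1}(a,-b)$ satisfy a three-term recurrence with the same coefficients as the Dickson recurrence; it therefore suffices to check that two consecutive terms agree. For the first-kind identity one matches $D_0,D_1$ with $V_0(a,-b),V_1(a,-b)$, and for the second-kind identity one matches $E_0,E_1$ with $U_1(a,-b),U_2(a,-b)$ --- the off-by-one in the index being the only bookkeeping that needs attention. The two base cases plus the shared recurrence then give both identities by induction. (Alternatively, one could route everything through the Binet-type closed forms $\alpha^n+\beta^n$ and $\tfrac{\alpha^{n+1}-\beta^{n+1}}{\alpha-\beta}$, reducing each identity to an equality of characteristic polynomials; I would prefer the recurrence approach since it stays inside $\Z$ and avoids the repeated-root case $\alpha=\beta$.)

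I expect the main obstacle to be the clean derivation of the first-kind recurrence directly from \eqref{firstkind}: the weight $\tfrac{n}{n-t}$ means the coefficients no longer satisfy Pascal's rule on the nose, and one must also treat the top term of each floor-truncated sum separately according to the parity of $n$. Passing through $D_n=E_n-Y\,E_{n-2}$ sidesteps this, at the modest cost of first verifying that auxiliary relation from the two closed forms, itself a short further application of Pascal's rule.
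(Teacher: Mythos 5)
First, a point of comparison: the paper does not actually prove this theorem --- it quotes it as a known fact from finite field theory and refers the reader to Kimberling and to Lidl--Mullen--Turnwald --- so your recurrence-plus-induction argument is not competing with an in-paper proof. Your work on the Dickson side is essentially sound: the recurrences $D_n=XD_{n-1}-YD_{n-2}$ and $E_n=XE_{n-1}-YE_{n-2}$, the Pascal's-rule verification for $E_n$, and the detour through $D_n=E_n-YE_{n-2}$ are all correct (the only quibble is that $D_0=2$ is a convention rather than a value of the defining sum, whose $n=0$ term carries the indeterminate factor $n/(n-t)$ at $t=0$).

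The gap is the one sentence you assert without checking: that after substituting the second parameter ``as prescribed in the statement,'' both $V_n(a,-b)$ and $U_{n+1}(a,-b)$ satisfy a three-term recurrence with the same coefficients as the Dickson recurrence. They do not. With the paper's convention $V_n(a,b)=aV_{n-1}(a,b)-bV_{n-2}(a,b)$, the sequence $n\mapsto V_n(a,-b)$ satisfies $W_n=aW_{n-1}+bW_{n-2}$, whereas $D_n(a,b)$ satisfies $W_n=aW_{n-1}-bW_{n-2}$; the second coefficients have opposite signs, so matching $D_0,D_1$ with $V_0(a,-b),V_1(a,-b)$ does not launch the induction. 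Indeed the identity as printed already fails at $n=2$: $D_2(a,b)=a^2-2b$ while $V_2(a,-b)=a^2+2b$, and likewise $E_2(a,b)=a^2-b$ while $U_3(a,-b)=a^2+b$. What your argument actually establishes --- and what the rest of the paper needs, since the proof of Theorem~\ref{chainsaw} substitutes $X=a$, $Y=-b$ into (\ref{firstkind}) and (\ref{secondkind}) --- is the sign-corrected statement $D_n(a,b)=V_n(a,b)$ and $E_n(a,b)=U_{n+1}(a,b)$, equivalently $D_n(a,-b)=V_n(a,-b)$ and $E_n(a,-b)=U_{n+1}(a,-b)$. Had you carried out the final coefficient comparison instead of asserting it, you would have caught the misprint and proved the correct identity; as written, the proof contains a false step at exactly the point where the theorem's printed statement is false.
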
•

We will prove Theorem~\ref{chainsaw} by showing that the $t^{\textnormal{th}}$ term of (\ref{firstkind}) and the $t^{\textnormal{th}}$  term of (\ref{secondkind}) can be graph-theoretically interpreted as the number of independent sets in the chainsaw graph $C(n,a,b)$ and the broken chainsaw graph $P(n,a,b)$, respectively, which contain exactly $t$ chain vertices. For this, we will need the following result, which is well known in graph theory, and is not difficult to prove. See, for example, \cite{FIXIND}. 

\begin{lemma}
For any $n\in\N$ and $t\in\N_0$, we have 
\begin{equation}
i_t(P_n)=\binom{n-t+1}{t},\label{oldstuffpath}
\end{equation}•
and we also have
\begin{equation}
i_t(C_n)=\frac{n}{n-t}\binom{n-t}{t}. \label{oldstuff}
\end{equation}•
\end{lemma}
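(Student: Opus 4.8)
The plan is to prove the path formula (\ref{oldstuffpath}) directly by a gap-counting bijection, and then to deduce the cycle formula (\ref{oldstuff}) by conditioning on a single vertex so as to reduce the cyclic problem to two path problems. For the path $P_n$ with vertices labeled $1,2,\ldots,n$ in order, an independent set of size $t$ is precisely a choice of $x_1<x_2<\cdots<x_t$ with $x_{j+1}-x_j\geq 2$ for every $j$. First I would apply the order-preserving shift $x_j\mapsto x_j-(j-1)$, which carries such strictly ``spread-out'' tuples bijectively onto arbitrary strictly increasing $t$-tuples drawn from $\{1,\ldots,n-t+1\}$. Since there are exactly $\binom{n-t+1}{t}$ of the latter, this establishes (\ref{oldstuffpath}).

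For the cycle $C_n$, I would fix a distinguished vertex, say vertex $n$, and partition the size-$t$ independent sets according to whether this vertex is used. If vertex $n$ is excluded, then the remaining vertices $\{1,\ldots,n-1\}$ carry exactly the path $P_{n-1}$, so by (\ref{oldstuffpath}) the number of such sets is $i_t(P_{n-1})=\binom{n-t}{t}$. If vertex $n$ is included, then both of its cyclic neighbors, vertices $1$ and $n-1$, must be excluded, and the remaining $t-1$ chosen vertices must be independent within the path induced on $\{2,\ldots,n-2\}$, which is a copy of $P_{n-3}$; by (\ref{oldstuffpath}) this contributes $i_{t-1}(P_{n-3})=\binom{n-t-1}{t-1}$. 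Adding the two cases yields $i_t(C_n)=\binom{n-t}{t}+\binom{n-t-1}{t-1}$.

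It then remains only to recognize the closed form. I would compute the ratio $\binom{n-t-1}{t-1}\big/\binom{n-t}{t}=t/(n-t)$ directly from the factorial definitions, so that
\[
i_t(C_n)=\binom{n-t}{t}\Bigl(1+\tfrac{t}{n-t}\Bigr)=\frac{n}{n-t}\binom{n-t}{t},
\]
which is exactly (\ref{oldstuff}).

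The delicate point—really the only one—is the handling of the cyclic wraparound together with the small and degenerate cases built into the stated conventions: the loop at $n=1$ and the single edge at $n=2$, where the auxiliary graphs $P_{n-1}$ and $P_{n-3}$ become trivial or empty and the factor $\frac{n}{n-t}$ must be read with care. For these I would verify the two formulas directly by hand, and I would also confirm that the case-split genuinely produces induced path subgraphs, i.e. that once the cyclic edge between vertices $1$ and $n$ is removed the vertex sets $\{1,\ldots,n-1\}$ and $\{2,\ldots,n-2\}$ inherit precisely the adjacencies of $P_{n-1}$ and $P_{n-3}$.
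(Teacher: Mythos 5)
Your proposal is correct, and it is worth noting that the paper itself offers no proof of this lemma at all: it simply declares the identities ``well known'' and points to the Hopkins--Staton reference \cite{FIXIND}. So your argument is a genuine, self-contained substitute for a citation rather than an alternative to an in-text proof. The two ingredients you use --- the gap-removing shift $x_j\mapsto x_j-(j-1)$ that turns spread-out $t$-subsets of $\{1,\dots,n\}$ into arbitrary $t$-subsets of $\{1,\dots,n-t+1\}$, and the case split on one distinguished cycle vertex reducing $i_t(C_n)$ to $i_t(P_{n-1})+i_{t-1}(P_{n-3})$ --- are the standard route to these formulas and are almost certainly what the cited source does; the ratio computation $\binom{n-t-1}{t-1}/\binom{n-t}{t}=t/(n-t)$ correctly assembles the closed form $\frac{n}{n-t}\binom{n-t}{t}$. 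Your closing caution about the degenerate conventions is well placed and is in fact slightly more careful than the paper: for $n=1$ (a vertex with a loop) and $n=2$ (a single edge) the cycle formula must be read via the identity $\frac{n}{n-t}\binom{n-t}{t}=\binom{n-t}{t}+\binom{n-t-1}{t-1}$ to avoid a vanishing denominator, and one should check by hand that the convention for the looped vertex matches the count the formula produces. I see no gap in the argument.
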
•

With this in place, we are now ready to proceed to the proof of Theorem~\ref{chainsaw}. 

\begin{proof}[Proof of Theorem~\ref{chainsaw}]
Fix $n,a,b\in\N$ so that $a\geq b$. As previously discussed, it follows from  (\ref{secondkind}) and (\ref{oldstuffpath}) that (\ref{pathgen}) holds if the number of independent sets in $P(n,a,b)$ which contain exactly $t$ chain vertices for $t\in\N_0$ is given by 
\begin{equation}
i_t(P_n)b^ta^{n-2t+1}.\label{firstob}
\end{equation}•
 First, note that the number of ways to choose $t$ independent chain vertices in $P(n,a,b)$, by definition, is $i_t(P_n)$. Then, once $t$ independent chain vertices are chosen, there are $t$ sets of $b-1$ blade vertices and $n-2t+1$ sets of $a-1$ blade vertices with which they share no adjacencies, so (\ref{firstob}) holds. A similar argument shows that the number of independent sets in $C(n,a,b)$ which contain exactly $t$ chain vertices for $t\in\N_0$ is given by 
 \begin{equation}
i_t(C_n)b^ta^{n-2t},
\end{equation}•
and thus, by (\ref{firstkind}) and (\ref{oldstuff}), we have (\ref{cyclegen}).
\end{proof}

A graph-theoretic interpretation of the Lucas Sequence is now established by Theorem~\ref{chainsaw}, and from its proof emerges a graph-theoretic interpretation of the terms of the Dickson polynomial. 

\section{Acknowledgments}

We would like to thank Professor Robert Coulter for bringing our
attention to the connection between Lucas Sequences and Dickson polynomials, which led to
a much shorter proof of our main result. We also thank both him and Professor Felix Lazebnik for their suggestions on the preliminary draft of this note. Finally, we thank the referee for carefully reviewing this note, and for catching a crucial indexing error.

\end{document}